\newtheorem{theorem}{Theorem}[section]
\newtheorem{proposition}[theorem]{Proposition}
\newtheorem{rem}[theorem]{Remark}
\def\N{{\mathbb N}}
\def\cA{{\mathcal A}}
\def\cE{{\mathcal E}}
\def\cH{{\mathcal H}}
\def\<{\langle}
\def\>{\rangle}
\date{ }
\begin{document}
\title{An extension of a Liapunov approach to the stabilization of second order coupled systems}
\maketitle

\begin{center}
Thierry Horsin\\
{\em Laboratoire M2N, EA7340\\
  CNAM, 292 rue Saint-Martin,\\
  75003 Paris\\
    France}\\
thierry.horsin@lecnam.net\\
\end{center}
\medskip
\begin{center}
Mohamed Ali JENDOUBI \\
{\em Universit\'e de Carthage,\\
Institut Pr\'eparatoire aux Etudes Scientifiques et Techniques,\\
B.P. 51 2070 La Marsa, Tunisia} \\
ma.jendoubi@fsb.rnu.tn \\
\end{center}
\begin{abstract}

\small{This paper deals with the convergence to $0$ of the energy of the solutions of a second order linear coupled system. In order to obtain the energy decay, we exhibit a Liapunov function.}

\vspace{6ex}
\noindent{\bf Mathematics Subject Classification 2010 (MSC2010):}
35B40, 49J15, 49J20.\vspace{6ex}

\noindent{\bf Key words:} damping, linear evolution equations,
dissipative hyperbolic equation, decay rates, Liapunov function.

\end{abstract}

\section{Introduction, functional framework.}

Let us consider a quite general coupled system in abstract form
 \begin{equation}\label{SystAbstraitIntro1}
\left\{ \begin{array}{ll}
u'' +Bu'+A_1 u +\alpha C v=0 & \\[2mm]
 v''+A_2 v+\alpha C^* u=0, &  
\end{array} \right. 
\end{equation}
 where $A_1$ and $A_2$ and $C$ are, in general, unbounded operators. F. Alabau and al. considered in \cite{ACK02},  the case when $C=Id$, and $A_1$ and $A_2$  are densely closed linear self-adjoint coercive operator and  $B$ is a coercive bounded self-adjoint operator. They proved that if $|\alpha| \Vert C\Vert <1$  then the energy of the solution $(u,v)$ in polynomially decreasing under quite large assumption on $A_1$ and $A_2$.  In this paper our main concern is the case $A_2=A_1^2$ which is a special case of the aforementioned paper. \\
 When $A_1=A_2$ and $|\alpha| \Vert C\Vert <1$, A. Haraux and M.A. Jendoubi proved in \cite{HJ2016} (see also \cite{HJ}) the polynomial convergence to $0$ of the energy  by means of a Liapunov method.\\
As we previously said, in this paper, we investigate such a method in the case when $A_2=A_1^2$ and $C=A_1^\beta $ with $\beta\in [0,\frac32]$. The main result of this paper is Theorem \ref{mainthm} which also proves the polynomial convergence to $0$ of the solution $(u,v)$. Compared to the result in \cite[p. 144, Proposition 5.3]{ACK02}, the convergence that we obtain is in weaker norms, but requires less regularity on the initial data.

 In order to motivate the Liapunov function that we construct in the proof of our main result, we explain the strategy in section \ref{scalarcase} in the framework of a coupled scalar differential system.

 In section \ref{sectionStronglyCoupled1} we introduce the functional framework and an existence theorem that lead to state and prove our main result, namely Theorem \ref{mainthm}.

\section{A Liapunov function for the scalar case\label{scalarcase}}
 
 As mentioned in the preceding section, we consider the (real) scalar coupled system

\begin{equation} \label{ODE}
\left\{ \begin{array}{ll}
u''+u'+\lambda u +cv=0 & \\[2mm]
v''+\mu v +cu=0 &  
\end{array} \right. \end{equation}
where $\lambda,\mu>0$,  and $c$ are such that $0< c^2<\lambda\mu$. The damping coefficient is set to $1$ for simplicity but a time scale change reduces general damping terms $bu'$ to this case. In order to shorten the formulas, let us introduce for each solution $(u,v)$ of \eqref{ODE}, its total energy
$$ \cE(u,u',v,v') = \frac12\left[   u'^2+   v'^2+\lambda u^2+\mu v^2\right]+ c uv .$$
Then we have for all $t\ge0$ $$ \frac{d}{dt} \cE(u,u',v,v') = -u'^2. $$
Now we introduce
$$K(t)=\frac12\left[   u'^2+   v'^2+\lambda u^2+\mu v^2\right].$$ Our first  result is the following

 \begin{proposition} \label{Liapscal} There are some constants $c>0$, $\delta>0$ such that
 $$\forall t\geq0\quad K(t)\leq ce^{-\delta t}K(0).$$
\end{proposition}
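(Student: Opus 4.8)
The plan is to build a Liapunov function $\Phi$ of the form $\Phi = \cE + \varepsilon \Psi$, where $\cE$ is the total energy (which is equivalent to $K$ because $|c| < \sqrt{\lambda\mu}$ forces the cross term $cuv$ to be dominated by $\frac12(\lambda u^2 + \mu v^2)$) and $\Psi$ is a lower-order correction chosen so that $\Phi$ decays exponentially. Since $\frac{d}{dt}\cE = -u'^2$ controls only $u'$, the correction $\Psi$ must be designed to recover dissipation in the remaining variables $u$, $v$, $v'$. A natural first attempt is $\Psi_1 = uu'$, whose derivative is $u'^2 + uu'' = u'^2 - uu' - \lambda u^2 - cuv$, contributing the crucial $-\lambda u^2$ term. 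To control $v$ and $v'$ I would add a second piece built from the coupling: because $v'' = -\mu v - cu$, something like $\Psi_2 = v v' + \text{(coupling term mixing } u \text{ and } v')$ is needed so that the $-cuv$ and $-cu v'$ type terms that appear can be absorbed. A clean choice that often works here is to use the combination $\Psi = u u' + \eta (u' v' + \mu u v)$ or a similarly weighted bilinear form; I would keep the coefficients as free parameters and fix them at the end.

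The key steps, in order, are: (i) verify the energy identity $\frac{d}{dt}\cE = -u'^2$ (already given) and the two-sided bound $a_1 K \le \cE \le a_2 K$ with $a_1, a_2 > 0$ depending only on $\lambda, \mu, c$, using $2|cuv| \le \theta(\lambda u^2 + \mu v^2)$ with $\theta = |c|/\sqrt{\lambda\mu} < 1$; (ii) compute $\frac{d}{dt}\Psi$ explicitly, substituting the equations \eqref{ODE} to eliminate $u''$ and $v''$, and collect terms as a quadratic form in $(u, u', v, v')$; (iii) choose the coefficients inside $\Psi$ so that this quadratic form is negative definite modulo a controlled multiple of $u'^2$ — i.e. $\frac{d}{dt}\Psi \le -\kappa(u^2 + v^2 + v'^2) + M u'^2$ for some $\kappa, M > 0$; (iv) set $\Phi = \cE + \varepsilon \Psi$ and choose $\varepsilon$ small enough that, on the one hand, $\frac12 \cE \le \Phi \le 2\cE$ (so $\Phi$ is equivalent to $K$), and on the other hand $\frac{d}{dt}\Phi = -u'^2 + \varepsilon \frac{d}{dt}\Psi \le -\varepsilon\kappa(u^2+v^2+v'^2) - (1 - \varepsilon M)u'^2 \le -\delta_0 K \le -\delta \Phi$; (v) integrate the resulting differential inequality $\Phi' \le -\delta \Phi$ to get $\Phi(t) \le e^{-\delta t}\Phi(0)$, and convert back to $K$ using the equivalences, adjusting the constant $c$ in the statement.

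The main obstacle I expect is step (iii): producing a single correction $\Psi$, polynomial of degree two in the phase variables, whose time derivative is negative definite in the three "undamped" directions while its potentially bad terms are all of the form $u'^2$ or cross terms with $u'$ (which Cauchy–Schwarz then sends into $\kappa$-fraction of the good negative terms plus a large multiple of $u'^2$). The coupling constant $c$ is what makes this possible — it links the $u$ and $v$ equations so that dissipation "leaks" from $u'$ into $v$ and $v'$ — but one has to be careful that the quadratic form stays definite for all admissible $c$ with $c^2 < \lambda\mu$, and in particular that no choice of coefficients is forced to blow up as $c^2 \to \lambda\mu$ in a way that destroys the bound; tracking this uniformity, and checking the sign conditions on the resulting $4\times 4$ (or after using the $u'^2$ slack, $3\times 3$) symmetric matrix, is the delicate computational heart of the argument. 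This scalar computation is meant to be the blueprint for the operator case, so I would also record which algebraic identities (rather than inequalities) are being used, since those are what generalize cleanly to unbounded $A_1$.
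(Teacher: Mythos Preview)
Your overall plan matches the paper's: perturb $\cE$ by $\varepsilon$ times a quadratic correction, show the result is equivalent to $K$ for small $\varepsilon$, and drive the derivative below $-\delta K$. The gap is in step (iii): the concrete correction you propose, $\Psi = uu' + \eta(u'v' + \mu uv)$, does not generate any definite negative term in $v$ or $v'$. Differentiating and substituting the equations gives $(u'v')' = -u'v' - \lambda uv' - cvv' - \mu u'v - cuu'$ and $(\mu uv)' = \mu u'v + \mu uv'$, so $(u'v'+\mu uv)' = -u'v' + (\mu-\lambda)uv' - cvv' - cuu'$: every term is a cross term, and no amount of Young's inequality will extract $-v^2$ or $-v'^2$ from this without giving away an equal amount elsewhere. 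The piece $uu'$ alone produces $-\lambda u^2$ but nothing for $v,v'$.

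The missing idea is that the only route to a $-v^2$ term is through the coupling in the \emph{first} equation: since $u'' = -u' - \lambda u - cv$, a term of the form $\tfrac{1}{c}\,u'v$ yields, upon differentiation, $\tfrac{1}{c}u''v = -v^2 + (\text{terms with }u,u')$. The $1/c$ scaling is not optional; dissipation can reach the $v$-block only through the coupling, which is $O(c)$, so the multiplier must be $O(1/c)$. The paper takes
\[
H_\varepsilon \;=\; \cE \;-\;\varepsilon\,vv' \;+\; 2\varepsilon\,uu' \;+\; \frac{3\varepsilon}{2c}\bigl(\mu\,u'v - \lambda\,uv'\bigr),
\]
where $-vv'$ supplies $-v'^2$ (at the price of $+\mu v^2$), the term $\tfrac{3\mu}{2c}u'v$ then overcompensates with $-\tfrac{3}{2}\mu v^2$, the companion $-\tfrac{3\lambda}{2c}uv'$ is there so that the $\lambda\mu\,uv$ pieces from the two cross terms cancel exactly, and $2uu'$ gives enough $-\lambda u^2$ to survive the $+\tfrac{3}{2}\lambda u^2$ produced along the way. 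After this choice every remaining indefinite term carries a factor of $u'$ and is absorbed by Young's inequality into the dominant $-u'^2$. Your worry about degeneration near the boundary of the admissible range is well placed: the construction blows up both as $c\to 0$ (through the $1/c$) and as $c^2\to\lambda\mu$ (through the equivalence constants), and $\delta\to 0$ in either limit.
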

 \begin{proof}   For  all $\varepsilon>0$ we define the function
  \begin{equation}\label{FtLiapunovSystDiff}
H_\varepsilon=\cE-\varepsilon vv'+ 2\varepsilon uu'+\frac{3\varepsilon}{2c}(\mu u'v-\lambda uv').
\end{equation} 
It is easy to check that
\begin{equation}\label{estimationHepsilonK} 
C_{1} K(t)\leq H_{\varepsilon}(t)\leq C_{2}K(t)\end{equation}
where
$$C_{1}=\left[\frac{\sqrt{\lambda\mu}-\vert c\vert}{\sqrt{\lambda\mu}}-\varepsilon\left(\frac{2}{\min\left(\sqrt{\lambda},\sqrt{\mu}\right)}+\frac{3}{2\vert c\vert}\max\left(\sqrt{\lambda},\sqrt{\mu}\right)\right)\right]$$
and $$ C_{2}=\left[\frac{\sqrt{\lambda\mu}+\vert c\vert}{\sqrt{\lambda\mu}}+\varepsilon\left(\frac{2}{\min\left(\sqrt{\lambda},\sqrt{\mu}\right)}+\frac{3}{2\vert c\vert}\max\left(\sqrt{\lambda},\sqrt{\mu}\right)\right)\right].$$
Let $\varepsilon_{1}>0$ such that $C_{1}=0$ and $\varepsilon\in(0,\varepsilon_{1}).$ 
An obvious calculation gives
\begin{eqnarray*} 
H_\varepsilon'&=& -u'^2-\varepsilon v'^2-\varepsilon vv''+2\varepsilon u'^2+2\varepsilon uu''+\frac{3\varepsilon}{2c}[(\mu-\lambda) u'v'+\mu u''v-\lambda u v'' ]\\
&=&-(1-2\varepsilon)u'^2-\varepsilon v'^2+\varepsilon c uv+\varepsilon\mu v^2-2\varepsilon uu'-p\varepsilon\lambda u^2-2\varepsilon c uv\\
&&\quad +\frac{3\varepsilon}{2c}[(\mu-\lambda) u'v'-\mu u'v-\mu\lambda uv-\mu c v^2 +\lambda c u^2+\lambda \mu uv  ]\\
&=&-(1-2\varepsilon)u'^2-\varepsilon v'^2-\lambda \varepsilon\frac{1}{2}u^2-\mu \varepsilon\frac{1}{2}v^2-2\varepsilon uu'-\varepsilon c uv\\
&&\quad -\mu \varepsilon\frac{3}{2c}u'v+(\mu-\lambda) \varepsilon\frac{3}{2c}u'v'. \end{eqnarray*}
Now we have
\begin{eqnarray*} \lambda u^2+2c uv+\mu v^2 &=&\lambda\left(u^2+\frac{2c}{\lambda}uv+\frac{\mu}{\lambda}v^2\right)\\
&=&\lambda\left[\left(u+\frac{c}{\lambda}v\right)^2+\left(\frac{\mu}{\lambda}-\frac{c^2}{\lambda^2}\right)v^2\right]\\
&\geq& \frac{\lambda\mu-c^2}{\lambda} v^2.
\end{eqnarray*}
Similaraly we get
$$\lambda u^2+2cuv+\mu v^2\geq \frac{\lambda \mu-c^2}\mu u^2,$$
and then 
$$\lambda u^2+2c uv+\mu v^2\geq \frac{\lambda\mu-c^2}{2}\left(\frac{1}{\mu}u^2+\frac{1}{\lambda}v^2\right)$$ 
Using Young's inequality, we can find some constants $c_{1},c_{2},c_{3}>0$ such that
\begin{eqnarray*}&&\vert 2 u u'\vert\leq\frac{1}{8\mu}(\lambda\mu-c^2)u^2+c_{1}u'^2;\\
&& \frac{3}{2c}\vert\mu u'v\vert\leq\frac{1}{8\lambda}(\lambda\mu-c^2)v^2+c_{2}u'^2; \\
&& \frac{3}{2c}\vert\mu-\lambda\vert \vert u'v'\vert\leq\frac12 v'^2+c_{3}u'^2.
\end{eqnarray*}
Finally we obtain
$$H_{\varepsilon}'\leq-(1-(2+c_{1}+c_{2}+c_{3})\varepsilon)u'^2-\frac{\varepsilon}{2}v'^2-\frac{\varepsilon}{8\mu}(\lambda\mu-c^2)u^2-\frac{\varepsilon}{8\mu}(\lambda\mu-c^2)v^2$$
 
Now by choosing $\varepsilon\in(0,\varepsilon_{1})$  such that $1-(2+c_{1}+c_{2}+c_{3})\varepsilon>0,$ you can find some constant $C_{3}>0$ such that
$$H_{\varepsilon}'\leq -C_{3}K(t). $$
By combining this with the inequality \eqref{estimationHepsilonK}, we get for all $t\geq0$
$$H_{\varepsilon}'(t)\leq -\frac{C_{3}}{C_{2}}H_{\varepsilon}(t).$$
We conclude the proof by integrating this last inequality and using \eqref{estimationHepsilonK} again.
 \end{proof}

 \section {The case $A_2=A^2$,  and $C=A^\beta$ with $\beta \in [0,\frac32]$}\label{sectionStronglyCoupled1}

 This section is devoted to the proof of Theorem \ref{mainthm}. In order to proceed we first introduce the functional framework and give an exsitence theorem.

 \subsection{Functional framework}

Let  $H$ be a hilbert space, whose norm and scalar product will be denote
 $\Vert \cdot\Vert $ and $\langle\cdot,\cdot\rangle$ respectively. We consider $A:H\to H$ an unbounded closed self-adjoint operator such that the injection $D(A)\subset H$ is dense and compact. We assume moreover throughout the paper that there exists $a>0$ such that \begin{equation}\label{Apositif}
   \forall u\in D(A),\quad \langle Au,u\rangle\geq a\langle u,u\rangle.
   \end{equation}
 Following for example the exposition given in \cite{komlivre}, by denoting $(\lambda_n)_{n\in \N^*}$ the eigensequences of $A$, the largest $a$ for which \eqref{Apositif} is true is $\lambda_1$.

 Besides, let us consider $(e_n)_{n\in \N^*}$ an orthonormal basis of $H$ constituted by eigenvectors of $A$. For any $\beta>0$, we consider $u=\displaystyle \sum_{n=1}^\infty \langle u,e_{i}\rangle e_i\in H \quad (\iff \sum_{n=1}^\infty \langle u,e_{i}\rangle^2<\infty)$, and we define $A^\beta: H\to H$ by
 \begin{equation}
   A^\beta u=\sum_{i=1}^\infty \lambda_i^\beta \langle u,e_{i}\rangle e_i
 \end{equation}
 then (see e.g. \cite{komlivre})
 \begin{equation*}
   D(A^\beta)=\{u\in H,\, \sum_{i=1}^\infty \lambda_i^{2\beta}\langle u,e_{i}\rangle^2<\infty\}
 \end{equation*}
 and $A^\beta$ is an unbounded self-adjoint operator such that the inclusion $D(A^\beta)\subset H$ is dense and compact. We also have
 \begin{equation}\label{Apositif2}
   \forall u\in D(A^\beta ),\quad \langle A^\beta u,u\rangle\geq a\langle u,u\rangle.
 \end{equation}
 for some $a>0$. The largest $a$ for which this inequality is true being $\lambda_1^\beta$.

As usual we write $A^0=Id$. In this case of course the operator $A^\alpha$ is a continuous linear operator on $H$.
 
 We will denote $V=D(A^{1/2})$ and $W=D(A)$. Thus $V$ and $W$ are Hilbert spaces whose norms $\Vert\cdot\Vert_V$ and $\Vert\cdot\Vert_W$ are given respectively by
 \begin{equation*}
    \Vert u\Vert_{V}=\Vert A^{1/2}u\Vert,\quad\Vert u\Vert_{W}=\Vert Au\Vert.
   \end{equation*}
We  have, if we identify $H$ with its dual
 \begin{equation}
   W\subset V\subset H\subset V'\subset W'
 \end{equation}
 with dense and compact injections when the norms on the Hilbert spaces $V'$ and $W'$ are given by
 \begin{equation*}
   \forall u\in V',\quad \Vert u\Vert _{V'}=\langle u,A^{-1}u\rangle_{V',V}^{1/2}
   \end{equation*}
and \begin{equation*}
   \forall u\in W',\quad\Vert u\Vert _{W'}=\langle u,A^{-2}u\rangle_{W',W}^{1/2}
 \end{equation*}
where $\langle \cdot,\cdot\rangle_{V',V}$ denotes the action of $V'$ on $V$ (with a similar notation for $W$). Of course when $u\in H$ one has
 \begin{equation*}
   \Vert u\Vert _{V'}=\langle A^{-1}u,u\rangle^{1/2},\,\Vert u\Vert _{W'}=\langle A^{-2}u,u\rangle^{1/2}.
   \end{equation*}
Let us remark that with these definitions $A$ maps continuously $V$ to $V'$ and $A^2$ maps $W$ to $W'$.
\subsection{Existence result}
Let $\alpha$ and $\beta$ two reals numbers with $\beta\geq 0$. We recall that we consider the problem 
  \begin{equation}\label{SystOndeFortAbstrait1}
\left\{ \begin{array}{ll}
u'' +u'+A u +\alpha A^\beta v=0 & \\[2mm]
 v''+A^2 v+\alpha A^\beta u=0 &  
\end{array} \right. 
\end{equation}
which can be rewritten as the first order system

\begin{equation}\label{SystAbstrait}
\left\{ \begin{array}{ll}
u'-w=0 & \\[2mm]
v'-z=0 & \\[2mm]
w'+Au+w+\alpha A^\beta v=0 & \\[2mm]
z'+A^2v+\alpha A^\beta u=0. &  
\end{array} \right. 
\end{equation}
\noindent Let us  first establish an existence and uniqueness result for \eqref{SystOndeFortAbstrait1}.\\
We concentrate on the case $\beta\in [\frac12,\frac32]$, the case $\beta\in [0,\frac12)$ being easier.\\
Let us consider
$$\cH:= V\times W\times H\times H.$$
 For two elements of $\cH$ $U_i=(u_i,v_i,w_i,z_i)$, $i=1,2$, we define 
 \begin{eqnarray*}\langle  U_1,U_2\>_{\cH}&:=&\langle Au_2,u_1\>_{V',V}+\langle A^2v_2,v_1\>_{W',W}+\langle w_1,w_2\>_H+\langle z_1,z_2\>_{H}\\&&+\alpha\langle A^\beta v_2,u_1\>_{V',V}+\alpha\langle A^{\beta}u_2,v_1\>_{W',W},\end{eqnarray*} where $\langle \cdot,\cdot\>_{V',V}$ denotes the usual duality pairing between $V$ and $V'$, with similar notation for $W$, while $\langle \cdot,\cdot\>_H$ denotes the scalar product on $H$ for which it is an Hilbert space.

 It is straightforward to prove that $\langle \cdot,\cdot\>_{\cH}$ defines a scalar product on $\cH$ for which it is an Hilbert space.

We now consider the unbounded operator $\cA:\cH\longrightarrow \cH$ defined by
$$ D({\cal A}):=\{U=(u,v,w,z)\in \cH,\,  (-w,-z,Au+\alpha A^\beta v+w,A^2v+\alpha A^\beta u)\in \cH,\}$$
and for $U=(u,v,w,z)\in D(\cal A)$
$${\cal A}(U)=(-w,-z,Au+\alpha A^\beta v+w,A^2v+\alpha A^\beta u).$$
It is clear that $\cal A$ has a dense domain in $\cH$.\\
Let us remark that for any $U=(u,v,w,z)\in D(\cal A)$ one has
\begin{eqnarray*}
    \langle {\cal A}U,U\>_{\cH}&=&-\langle Au,w\>_{V',V}-\langle A^2v,z\>_{W',W}+\langle Au+\alpha A^\beta v+w,w\>_H+\langle A^2v+\alpha A^\beta u,z\>_H\\ &&-\alpha\langle A^\beta v,w\>_{V',V}-\alpha\langle A^{\beta}u,z\>_{W',W},
\end{eqnarray*}
and therefore $\langle {\cal A}U,U\>=\Vert w\Vert_H^2\geq 0$.
Indeed
$$\langle A^2v+\alpha A^\beta u,z\>_H=\langle A^2v+\alpha A^\beta u,z\>_{W',W},$$ since $A^\beta u\in W'$ for $\beta\in [1,3/2]$ and $u\in V=D(A^{1/2})$.\\
Let us show that $I+{\cal A}$ is onto. For this we take $(f,g,h,k)\in \cH$.
We want to find $(u,v,w,z)\in D({\cal A})$ such that
\begin{eqnarray*}
  &&u-w=f\\
  &&v-z=g\\
  &&Au+\alpha A^\beta v+2u=h+2f\\
  &&A^2+\alpha A^\beta u+v=k+g.
\end{eqnarray*}

We define $\Phi:(V\times W)^2\to {\mathbb R}$ by
\begin{eqnarray*}\Phi(u_1,v_1,u_2,v_2)&:=&\langle A^{1/2}u_1,A^{1/2}u_2\>_H+\langle Av_1,Av_2\>_H+
\alpha \langle A^\beta v_1,u_2\>_{V',V}+\\
&&\alpha \langle Au_1,A^{\beta-1}v_2\>_{V',V}+2\langle u_1,u_2\>_H+\langle v_1,v_2\>_H.
\end{eqnarray*}
Clearly  $\Phi$ is continuous on $V\times W$.
It is also clear that $\Phi$ is coercive if we assume $\vert \alpha\vert < \lambda_{1}^{\frac{3-2\beta}{2}}$.\\
By the Lax-Milgram theorem, there exists a unique $(u,v)\in V\times W$ such that
$$\forall (\delta u,\delta v)\in V\times W,\quad\Phi(u,v,\delta u,\delta v)=\langle h+2f,\delta u\>_H+\langle k+g,\delta v\>_H.$$
We therefore get  
$$A^2v+\alpha A^\beta u+v=g+k$$  $$Au+\alpha A^\beta v+2u=h+2f.$$
Now if we denote $w=u-f$ and $z=v-g$ then $w\in V$ since $u$ and $f$ do and $z\in W$ since $v$ and $g$ do.\\
We have thus proven that $\cA$ is maximal monotone. By classical theory, we get that 
\begin{theorem}\label{existhm}Assume that $\vert \alpha\vert \langle  \lambda_{1}^{\frac{3-2\beta}{2}}$. For any $(u_0,v_0,u_1,v_1)\in \cal H$, there exists a unique solution to \eqref{SystOndeFortAbstrait1} in $C(0,T,{\cal H})\times
  C^1(0,T,D({\cal A})')$.\end{theorem}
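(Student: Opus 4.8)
The plan is to apply the standard Hille--Yosida / Lumer--Phillips machinery for maximal monotone operators, since essentially all the work has already been done in the paragraphs preceding the statement. First I would record that $\cH=V\times W\times H\times H$ is a Hilbert space for the inner product $\langle\cdot,\cdot\rangle_\cH$ introduced above: symmetry and bilinearity are immediate, and positive-definiteness follows by completing the square on the cross terms $\alpha\langle A^\beta v,u\rangle_{V',V}$ and $\alpha\langle A^\beta u,v\rangle_{W',W}$ exactly as in the scalar case of Section \ref{scalarcase}, using that $\vert\alpha\vert<\lambda_1^{(3-2\beta)/2}$ forces the quadratic form to dominate a positive multiple of $\Vert u\Vert_V^2+\Vert v\Vert_W^2+\Vert w\Vert_H^2+\Vert z\Vert_H^2$. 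Rewriting \eqref{SystOndeFortAbstrait1} as the first order system \eqref{SystAbstrait} means $U'=-\cA U$ with $\cA$ the operator defined above, and $\cA$ has dense domain since $D(A)\times D(A^2)\times D(A^{1/2})\times D(A^{1/2})\subset D(\cA)$ is dense in $\cH$.

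Next I would verify the two hypotheses of the maximal monotone theory. Monotonicity is the computation already displayed: for $U=(u,v,w,z)\in D(\cA)$ the duality pairings cancel in pairs and one is left with $\langle\cA U,U\rangle_\cH=\Vert w\Vert_H^2\geq 0$, the only subtle point being the identification $\langle A^2 v+\alpha A^\beta u,z\rangle_H=\langle A^2v+\alpha A^\beta u,z\rangle_{W',W}$, which is legitimate because $A^\beta u\in W'$ whenever $u\in V$ and $\beta\in[1,3/2]$ (and trivially for $\beta\in[0,1)$). For maximality one must show $I+\cA$ is onto: given $(f,g,h,k)\in\cH$ we seek $(u,v,w,z)\in D(\cA)$ solving the four equations, which after eliminating $w=u-f$, $z=v-g$ reduces to the coupled elliptic system $Au+\alpha A^\beta v+2u=h+2f$, $A^2v+\alpha A^\beta u+v=k+g$. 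I would interpret this variationally through the bilinear form $\Phi$ on $(V\times W)^2$, check its continuity (routine, using $A:V\to V'$ and $A^{\beta-1}:W\to V'$ boundedly when $\beta\leq 3/2$) and its coercivity (again the same square-completion, valid under $\vert\alpha\vert<\lambda_1^{(3-2\beta)/2}$), and invoke Lax--Milgram to get a unique $(u,v)\in V\times W$; one then checks that the resulting $(u,v,w,z)$ indeed lies in $D(\cA)$ because the equations themselves force $Au+\alpha A^\beta v+w$ and $A^2v+\alpha A^\beta u$ to be in $H$.

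Having established that $\cA$ is maximal monotone (equivalently $-\cA$ generates a $C_0$ semigroup of contractions on $\cH$ by Lumer--Phillips), the conclusion is the standard one: for every initial datum $(u_0,v_0,u_1,v_1)\in\cH$ the abstract Cauchy problem $U'+\cA U=0$, $U(0)=(u_0,v_0,u_1,v_1)$ has a unique mild solution $U\in C([0,T];\cH)$, and by the general regularity statement for semigroups it satisfies $U'=-\cA U\in C([0,T];D(\cA)')$, which unwinds to a solution $(u,v)$ of \eqref{SystOndeFortAbstrait1} in the class $C(0,T,\cH)\times C^1(0,T,D(\cA)')$ asserted in the theorem. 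I would simply cite a standard reference (e.g. Brezis, Haraux, or Pazy) for this last step rather than reprove it.

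The only genuine obstacle is the coercivity of $\Phi$ (and positive-definiteness of $\langle\cdot,\cdot\rangle_\cH$): one must show that the cross term $\alpha\langle A^\beta v,u\rangle$, estimated via $\vert\langle A^\beta v,u\rangle\vert\leq \Vert A^{\beta-1}v\Vert_H\,\Vert Au\Vert_H$ or the symmetric bound, together with the spectral inequality $\Vert A^{\beta-1}v\Vert_H\leq\lambda_1^{(2\beta-3)/2}\Vert Av\Vert_H$ (note $2(\beta-1)-2=2\beta-4$ and comparison with $2$, using $\lambda_n\geq\lambda_1$), is controlled by $\tfrac12(\Vert A^{1/2}u\Vert_H^2+\Vert Av\Vert_H^2)$ precisely when $\vert\alpha\vert<\lambda_1^{(3-2\beta)/2}$; everything else is bookkeeping with the Gelfand triples $W\subset V\subset H\subset V'\subset W'$.
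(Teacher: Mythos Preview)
Your proposal is correct and follows essentially the same route as the paper: the maximal monotone argument (density of $D(\cA)$, the computation $\langle \cA U,U\rangle_\cH=\Vert w\Vert_H^2$, surjectivity of $I+\cA$ via Lax--Milgram on $\Phi$, then the classical semigroup conclusion) is exactly what the paper does in the paragraphs preceding the theorem. You actually spell out more than the paper does---in particular the coercivity estimate for $\Phi$ and the positive-definiteness of $\langle\cdot,\cdot\rangle_\cH$ under $\vert\alpha\vert<\lambda_1^{(3-2\beta)/2}$, which the paper simply declares ``straightforward'' and ``clear''.
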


  \begin{rem}
    {\rm It is also well known that if $(u_0,v_0,u_1,v_1)\in D({\cal A})$ then the solution to \eqref{SystOndeFortAbstrait1} belongs to $C([0,T],D({\cal A}))\cap C^1(0,T,{\cal H})$.}
  \end{rem}
 \subsection{Main result of the paper}
Our main result is the following
 \begin{theorem}\label{mainthm}  Assume $\alpha \not=0$, $\beta\in[0,\frac32]$ and $\vert \alpha\vert < \lambda_{1}^{\frac{3-2\beta}{2}}$. Let $(u,v)$ be a solution of  \eqref{SystOndeFortAbstrait1}, then there exists a constant $c>0$  such that
\begin{eqnarray*} \forall t>0,  &&\Vert A^{\frac{\beta}{2}-1}u' (t)\Vert_{W'}^2+\Vert  A^{\frac{\beta}{2}-1} v'(t)\Vert_{W'}^2+\Vert{} A^{\frac{\beta-1}{2}}u(t)\Vert_{W'}^2+\Vert A^\frac{\beta}{2}v(t)\Vert_{W'}^2\\
 &&\le \frac{c} {t}\left[\Vert u' (0)\Vert^2+\Vert  v'(0)\Vert^2+\Vert{} u(0)\Vert_{V}^2+\Vert v(0)\Vert_{W}^2\right]\text{ if }\beta\in[0,1];\\ 
\forall t>0, &&\Vert A^{\frac{-\beta}{2}}u' (t)\Vert_{W'}^2+\Vert  A^{\frac{-\beta}{2}} v'(t)\Vert_{W'}^2+\Vert{} A^{\frac{1-\beta}{2}}u(t)\Vert_{W'}^2+\Vert A^{1-\frac{\beta}{2}}v(t)\Vert_{W'}^2\\&&\leq\frac{c}{t}\left[\Vert u' (0)\Vert^2+\Vert  v'(0)\Vert^2+\Vert{} u(0)\Vert_{V}^2+\Vert v(0)\Vert_{W}^2\right]\text{ if }\beta\in[1,\frac32].
 \end{eqnarray*} \end{theorem}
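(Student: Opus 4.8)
The plan is to lift the construction of Section~\ref{scalarcase} to the operator $A$, after a preliminary reduction of Theorem~\ref{mainthm} to a standard ``energy decay with loss of derivatives''. If $(u,v)$ solves \eqref{SystOndeFortAbstrait1}, so does $(A^{-s}u,A^{-s}v)$ for every $s\ge 0$, since $A$ commutes with $A^\beta$, $A^2$ and $\partial_t$. Take $s=2-\frac\beta2$ for $\beta\in[0,1]$ and $s=1+\frac\beta2$ for $\beta\in[1,\frac32]$ (so $s\ge\frac32$), and set $\bar u=A^{-s}u$, $\bar v=A^{-s}v$. Using $\|x\|_{W'}^2=\langle A^{-2}x,x\rangle$ for $x\in H$, one checks that the left-hand side of Theorem~\ref{mainthm} is
$$\cE_0(t):=\|\bar u'(t)\|^2+\|\bar v'(t)\|^2+\|\bar u(t)\|_V^2+\|\bar v(t)\|_W^2,$$
while the bracket on the right-hand side is $\cE_s(0)$, where $\cE_s(t):=\|A^s\bar u'(t)\|^2+\|A^s\bar v'(t)\|^2+\|A^s\bar u(t)\|_V^2+\|A^s\bar v(t)\|_W^2$ $(=\|u'(t)\|^2+\|v'(t)\|^2+\|u(t)\|_V^2+\|v(t)\|_W^2)$. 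Since $s\ge 1$, the data of $(\bar u,\bar v)$ lies in $D(\cA)$, so $(\bar u,\bar v)$ is a strong solution and the manipulations below are licit; it thus suffices to prove $\cE_0(t)\le \frac{c}{t}\,\cE_s(0)$.

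I would then copy the scalar Liapunov function \eqref{FtLiapunovSystDiff}, replacing $\lambda,\mu,c$ by $A,A^2,\alpha A^\beta$ and, crucially, the small parameter $\varepsilon$ by the operator $\varepsilon A^{-\gamma}$ with $\gamma:=4-2\beta$; this makes the effective small parameter in the spectral mode $\lambda_n$ equal to $\varepsilon\lambda_n^{-\gamma}$, which is exactly the scaling forced by the proof of Proposition~\ref{Liapscal} (there the admissible $\varepsilon$ shrinks like $\lambda_n^{-(4-2\beta)}$, because of the Young constant $\sim(\mu-\lambda)^2/c^2$). Explicitly,
$$G_\varepsilon=\overline{\cE}-\varepsilon\langle A^{-\gamma}\bar v,\bar v'\rangle+2\varepsilon\langle A^{-\gamma}\bar u,\bar u'\rangle+\frac{3\varepsilon}{2\alpha}\bigl(\langle A^{2-\beta-\gamma}\bar u',\bar v\rangle-\langle A^{1-\beta-\gamma}\bar u,\bar v'\rangle\bigr),$$
where $\overline{\cE}=\frac12\bigl[\|\bar u'\|^2+\|\bar v'\|^2+\langle A\bar u,\bar u\rangle+\langle A^2\bar v,\bar v\rangle\bigr]+\alpha\langle A^\beta\bar v,\bar u\rangle$. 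Writing $\alpha\langle A^\beta\bar v,\bar u\rangle=\alpha\langle A^{\beta-1/2}\bar v,A^{1/2}\bar u\rangle$ and using $\beta\le\frac32$ with $|\alpha|<\lambda_1^{(3-2\beta)/2}$, the cross term is bounded by a fraction $<1$ of $\|\bar u\|_V^2+\|\bar v\|_W^2$, so $\overline{\cE}\asymp\cE_0$; since the exponents $2-\beta-\gamma=\beta-2$ and $1-\beta-\gamma=\beta-3$ are negative, the three perturbative terms of $G_\varepsilon$ are $O(\varepsilon)\cE_0$. Hence $G_\varepsilon\asymp\cE_0$ for $\varepsilon$ small.

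Differentiating $G_\varepsilon$ along the solution and eliminating $\bar u'',\bar v''$ by the equation produces an identity which, expanded in the eigenbasis of $A$, is mode by mode the identity for $H_\varepsilon'$ in the proof of Proposition~\ref{Liapscal} with $(\lambda,\mu,c,\varepsilon)=(\lambda_n,\lambda_n^2,\alpha\lambda_n^\beta,\varepsilon\lambda_n^{-\gamma})$. With $\gamma=4-2\beta$ and $\varepsilon$ fixed small, all the constraints of that proof hold uniformly in $n$, and $\lambda_n^3-\alpha^2\lambda_n^{2\beta}\ge c_0\lambda_n^3$ (again by $\beta\le\frac32$); summing over $n$ gives
$$-G_\varepsilon'(t)\ \ge\ c\sum_n\lambda_n^{-\gamma}a_n(t),\qquad a_n:=|\bar u_n'|^2+|\bar v_n'|^2+\lambda_n|\bar u_n|^2+\lambda_n^2|\bar v_n|^2.$$
Since $\cE_0=\sum_n a_n$ and $\cE_s=\sum_n\lambda_n^{2s}a_n$, Cauchy--Schwarz gives $\cE_0\le\bigl(\sum_n\lambda_n^{-\gamma}a_n\bigr)^{1/2}\bigl(\sum_n\lambda_n^{\gamma}a_n\bigr)^{1/2}$, and $\sum_n\lambda_n^\gamma a_n\le C\cE_s(t)$ because $\gamma\le 2s$ ($4-2\beta\le 4-\beta$ on $[0,1]$, $4-2\beta\le 2+\beta$ on $[1,\frac32]$). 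Moreover $\cE_s(t)\le C\cE_s(0)$, since $\cE_s$ is equivalent to the energy $\overline{\cE}$ of the solution $(A^s\bar u,A^s\bar v)=(u,v)$, which is nonincreasing ($\frac{d}{dt}\overline{\cE}=-\|u'\|^2$). Combining, $-G_\varepsilon'\ge c\,\cE_0^2/\cE_s(0)\ge c'\,G_\varepsilon^2/\cE_s(0)$; integrating $(1/G_\varepsilon)'\ge c'/\cE_s(0)$ and using $G_\varepsilon(0)\le C\cE_s(0)$ yields $G_\varepsilon(t)\le C\cE_s(0)/t$, hence $\cE_0(t)\le C'\cE_s(0)/t$, which is the theorem. (Optimising the interpolation exponent actually gives $\cE_0(t)=O(t^{-2s/\gamma})$ with $2s/\gamma\ge1$, so $1/t$ is not sharp when $\beta>0$.)

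The main obstacle is the third step: confirming that $G_\varepsilon'$ really does collapse mode by mode onto the scalar identity of Proposition~\ref{Liapscal} --- equivalently, redoing that computation directly at the operator level. This demands careful bookkeeping of the several negative and fractional powers of $A$ involved, checking that each duality pairing is well defined on $\cH$ and that all the integrations by parts and uses of self-adjointness are justified for the strong solution $(\bar u,\bar v)$. The only places where the hypothesis $|\alpha|<\lambda_1^{(3-2\beta)/2}$ (with $\beta\le\frac32$) enters are the coercivity of $\overline{\cE}$ and the bound $\lambda_n^3-\alpha^2\lambda_n^{2\beta}\ge c_0\lambda_n^3$.
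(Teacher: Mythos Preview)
Your approach is correct, and it is genuinely different from the paper's, even though both build a Liapunov perturbation of the energy modeled on Section~\ref{scalarcase}.

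The paper works directly with $(u,v)$ and chooses perturbation terms in the $W'$-pairing with hand-picked powers of $A$ (for instance $\langle A^{\beta-2}v,v'\rangle_{W'}$, $\langle A^a u,u'\rangle_{W'}$ with $a=\min(0,1-\beta)$, and $\langle u',v\rangle_{W'}-\langle u,A^{-1}v'\rangle_{W'}$), tuned so that one obtains $H_\varepsilon'\le -\gamma K$ \emph{directly}, where $K$ is the weak-norm quantity in the statement. The paper then notices that $K$ is equivalent to a shifted energy $\tilde E$ (exactly your $\overline{\cE}$, i.e.\ the natural energy of $(A^{-s}u,A^{-s}v)$) which is itself nonincreasing along the flow, and concludes by the linear argument $t\,\tilde E(t)\le\int_0^t\tilde E\le C\int_0^t K\le C\,H_\varepsilon(0)$.

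Your route instead transcribes \eqref{FtLiapunovSystDiff} mode by mode with $\varepsilon$ replaced by the operator $\varepsilon A^{-\gamma}$; the derivative of $G_\varepsilon$ then controls only the super-weak quantity $\sum_n\lambda_n^{-\gamma}a_n$, and you recover $\cE_0$ by interpolating against $\cE_s$ and integrating a Riccati inequality for $G_\varepsilon$. Your identification of the critical exponent $\gamma=4-2\beta$ is correct: it is forced by the Young constant $c_3\sim(\mu-\lambda)^2/c^2\sim\lambda_n^{4-2\beta}$ in the proof of Proposition~\ref{Liapscal}, and with this choice all the scalar constraints are indeed uniform in $n$. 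The payoff of your more mechanical construction is twofold: no clever guess of the $W'$-exponents is needed, and, as you observe, optimizing the interpolation via H\"older yields $\cE_0(t)=O\!\bigl(t^{-2s/\gamma}\bigr)$ with $2s/\gamma>1$ for $\beta>0$, a rate the paper's linear integration does not reach. Conversely, the paper's argument avoids the interpolation and the nonlinear ODE, and makes the role of the auxiliary nonincreasing energy $\tilde E$ explicit.
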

  \begin{rem}\label{reg}{\rm
    If we replace \eqref{SystOndeFortAbstrait1} by
    \begin{equation}\label{SystOndeFortAbstrait2}
\left\{ \begin{array}{ll}
u'' +Bu'+A u +\alpha A^\beta v=0 & \\[2mm]
 v''+A^2 v+\alpha A^\beta u=0 &  
\end{array} \right. 
    \end{equation}
    where, as mentionned in the introduction, $B$ is a bounded self-adjoint operator on $H$ for which there exists $\mu>0$ such that
    \begin{equation*}
      \langle Bu,u\rangle \geq \mu \Vert u\Vert ^2,
      \end{equation*}
 the results of Theorem \ref{existhm} and Remark \ref{reg}  remain true.}
\end{rem}
  \begin{rem}\label{pert}{\rm If we replace \eqref{SystOndeFortAbstrait1} by
    \begin{equation*}\label{SystOndeFortAbstrait3}
\left\{ \begin{array}{ll}
u'' +Bu'+A u +\alpha A^\beta v=0 & \\[2mm]
 v''+A_2 v+\alpha A^\beta u=0 &  
\end{array} \right. 
    \end{equation*}
    where $A_2$ is a self-adjoint unbounded operator such that $D(A_2)=D(A^2)$ and there exist $\nu_1,\,\nu_2>0$   such that
    \begin{equation*}
      \forall u\in D(A_2),\,\nu_1\langle A^2u,u\rangle \leq \langle A_2u,u\rangle\leq \nu_2\langle A^2u,u\rangle,
    \end{equation*}
    and if $B$ is as in the remark \ref{reg},
    the result of Theorem \ref{mainthm} remains true provided $|\alpha\vert$ is small enough (depending on $\lambda_1$, $\nu_1$ and $\nu_2$).}
  \end{rem}
\begin{rem} {\rm In the case $\beta=0$, in order to obtain the decay of the energy, we must assume 
$$A^2u(0)\in V,\quad A^2v(0)\in W,\quad A^2u'(0)\in H,\quad A^2v'(0)\in H.$$  
In the paper \cite{ACK02}, the authors obtain such a decay with merely
$$A u(0)\in V,\quad A^2v(0)\in W,\quad Au'(0)\in H,\quad A^2v'(0)\in H.$$
We, of course, would expect that the energy decay also holds with 
$$(u(0),v(0),u'(0),v'(0))\in D({\cal A})$$
but unfortunately we are not able to prove it for the moment being.}
\end{rem}  
 \begin{proof}[\bf Proof of Theorem \ref{mainthm}.]
   
   All the computations below will be made assuming that $(u_0,v_0,u_1,v_1)\in D({\cal A})$ which ascertains them. By density and continuity the inequalities stated in Theorem \ref{mainthm} remain true.
   
We introduce the energy of the system by
$$E(t)=\frac12\left[\Vert u' (t)\Vert^2+\Vert  v'(t)\Vert^2+\Vert{} u(t)\Vert_{V}^2+\Vert v(t)\Vert_{W}^2\right]+\alpha\langle  A^\beta v,u\>.$$
Then we have
$$E'(t)=-\Vert u'(t)\Vert^2.$$
Let $p>1$ and $\varepsilon>0$ two real numbers to be fixed later and let  
\begin{eqnarray*}H_{\varepsilon}=E-\varepsilon\lambda_{1}^{2-\beta}\langle A^{\beta-2}v,v'\>_{W'}+p\varepsilon\lambda_{1}^{-a}\langle A^{a}u,u'\>_{W'}+\rho\varepsilon\left[\langle u',v\>_{W'}-\langle u,A^{-1}v'\>_{W'}\right] 
\end{eqnarray*}
where $\rho=\frac{p+1}{2\alpha}\lambda_{1}^{2-\beta}$ and $a=\min(0,1-\beta)$. We find easily
\begin{eqnarray*} H_{\varepsilon}'&=&-\Vert u'\Vert^2-\varepsilon\lambda_{1}^{2-\beta}\Vert A^{\frac{\beta}{2}-1}v'\Vert_{W'}^2-\varepsilon\lambda_{1}^{2-\beta}\langle A^{\beta-2}v,v''\>_{W'}+p\varepsilon\lambda_{1}^{-a}\Vert A^{\frac{a}{2}}u'\Vert_{W'}^2\\
&&-p\varepsilon\lambda_{1}^{-a}\langle A^{a}u,u'+Au+\alpha A^\beta v\>_{W'} +\rho\varepsilon\langle u',v'\>_{W'}-\rho\varepsilon\langle u',A^{-1}v'\>_{W'}\\
&&-\rho\varepsilon\langle u'+Au+\alpha A^\beta v,v\>_{W'}+\rho\varepsilon\langle u,A^{-1}(A^2v+\alpha A^\beta u)\>_{W'} \\
&=&-\Vert u'\Vert^2-\varepsilon\lambda_{1}^{2-\beta}\Vert A^{\frac{\beta}{2}-1}v'\Vert_{W'}^2+\varepsilon\lambda_{1}^{2-\beta}\langle A^{\beta-2}v,A^2v+\alpha A^\beta u\>_{W'}+p\varepsilon\lambda_{1}^{-a}\Vert A^{\frac{a}{2}}u'\Vert_{W'}^2\\
&&-p\varepsilon\lambda_{1}^{-a}\langle A^{a}u,u'\>_{W'}-p\varepsilon\lambda_{1}^{-a}\Vert A^{\frac{a+1}{2}}u\Vert_{W'}^2-p\varepsilon\lambda_{1}^{-a}\alpha\langle A^{a}u,A^{\beta} v\>_{W'} +\rho\varepsilon\langle u',v'\>_{W'}\\
&&-\rho\varepsilon\langle u',A^{-1}v'\>_{W'}-\rho\varepsilon\langle u',v\>_{W'}-\rho\varepsilon\alpha\Vert A^{\frac{\beta}{2}}v\Vert_{W'} +\rho\varepsilon\alpha\Vert A^{\frac{\beta-1}{2}}u\Vert^2_{W'}\\
&=&-\Vert u'\Vert^2-\varepsilon\lambda_{1}^{2-\beta}\Vert A^{\frac{\beta}{2}-1}v'\Vert_{W'}^2+\varepsilon\lambda_{1}^{2-\beta}\Vert A^{\frac{\beta}{2}}v\Vert_{W'}^2+\varepsilon\lambda_{1}^{2-\beta}\alpha\langle A^{\beta-2}v,A^\beta u\>_{W'}+p\varepsilon\lambda_{1}^{-a}\Vert A^{\frac{a}{2}}u'\Vert_{W'}^2\\
&&-p\varepsilon\lambda_{1}^{-a}\langle A^{a}u,u'\>_{W'}-p\varepsilon\lambda_{1}^{-a}\Vert A^{\frac{a+1}{2}}u\Vert_{W'}^2-p\varepsilon\lambda_{1}^{-a}\alpha\langle A^{a}u,A^{\beta} v\>_{W'} +\rho\varepsilon\langle u',v'\>_{W'}\\
&&-\rho\varepsilon\langle u',A^{-1}v'\>_{W'}-\rho\varepsilon\langle u',v\>_{W'}-\rho\varepsilon\alpha\Vert A^{\frac{\beta}{2}}v\Vert_{W'}^2 +\rho\varepsilon\alpha\Vert A^{\frac{\beta-1}{2}}u\Vert^2_{W'}\\
&=&-\Vert u'\Vert^2+p\varepsilon\lambda_{1}^{-a}\Vert A^{\frac{a}{2}}u'\Vert_{W'}^2-\varepsilon\lambda_{1}^{2-\beta}\Vert A^{\frac{\beta}{2}-1}v'\Vert_{W'}^2-p\varepsilon\lambda_{1}^{-a}\Vert A^{\frac{a+1}{2}}u\Vert_{W'}^2-\varepsilon\frac{p-1}{2}\lambda_{1}^{2-\beta}\Vert A^{\frac{\beta}{2}}v\Vert_{W'}^2\\
&&+\rho\varepsilon\alpha\Vert A^{\frac{\beta-1}{2}}u\Vert^2_{W'}+\varepsilon\lambda_{1}^{2-\beta}\alpha\langle A^{\beta-2}v,A^\beta u\>_{W'}-p\varepsilon\lambda_{1}^{-a}\langle A^{a}u,u'\>_{W'}-p\varepsilon\lambda_{1}^{-a}\alpha\langle A^{a}u,A^{\beta} v\>_{W'}\\
&& +\rho\varepsilon\langle u',v'\>_{W'}-\rho\varepsilon\langle u',A^{-1}v'\>_{W'}-\rho\varepsilon\langle u',v\>_{W'}. 
\end{eqnarray*}
\noindent {\bf First case :} $\beta\in [0,1]$. In this case $a=0$. We have
\begin{eqnarray*} H_{\varepsilon}'
&=&-\Vert u'\Vert^2+p\varepsilon \Vert  u'\Vert_{W'}^2-\varepsilon\lambda_{1}^{2-\beta}\Vert A^{\frac{\beta}{2}-1}v'\Vert_{W'}^2-p\varepsilon \Vert A^{\frac{1}{2}}u\Vert_{W'}^2-\varepsilon\frac{p-1}{2}\lambda_{1}^{2-\beta}\Vert A^{\frac{\beta}{2}}v\Vert_{W'}^2\\
&&+\rho\varepsilon\alpha\Vert A^{\frac{\beta-1}{2}}u\Vert^2_{W'}+\varepsilon\lambda_{1}^{2-\beta}\alpha\langle A^{\beta-2}v,A^\beta u\>_{W'}-p\varepsilon \langle u,u'\>_{W'}-p\varepsilon \alpha\langle u,A^{\beta} v\>_{W'}\\
&& +\rho\varepsilon\langle u',v'\>_{W'}-\rho\varepsilon\langle u',A^{-1}v'\>_{W'}-\rho\varepsilon\langle u',v\>_{W'}. 
\end{eqnarray*}
Now since
$$\Vert A^{\frac{\beta-1}{2}}u\Vert^2_{W'}\leq \frac{1}{\lambda_{1}^{2-\beta}}\langle Au,u\>_{W'}=\frac{1}{\lambda_{1}^{2-\beta}}\Vert A^{\frac{1}{2}}u\Vert_{W'}^2,$$ we get
\begin{eqnarray*} H_{\varepsilon}'&\leq&-\Vert u'\Vert^2+p\varepsilon\Vert  u'\Vert_{W'}^2-\varepsilon\lambda_{1}^{2-\beta}\Vert A^{\frac{\beta}{2}-1}v'\Vert_{W'}^2-\frac{p-1}{2}\varepsilon \Vert A^{\frac{1}{2}}u\Vert_{W'}^2-\varepsilon\frac{p-1}{2}\lambda_{1}^{2-\beta}\Vert A^{\frac{\beta}{2}}v\Vert_{W'}^2\\
&&+\varepsilon\lambda_{1}^{2-\beta}\alpha\langle A^{\beta-2}v,A^\beta u\>_{W'}-p\varepsilon \langle u,u'\>_{W'}-p\varepsilon \alpha\langle u,A^{\beta} v\>_{W'}\\
&& +\rho\varepsilon\langle u',v'\>_{W'}-\rho\varepsilon\langle u',A^{-1}v'\>_{W'}-\rho\varepsilon\langle u',v\>_{W'}.
\end{eqnarray*}
Let us remark that
\begin{eqnarray*}
\varepsilon\lambda_{1}^{2-\beta}\alpha\langle A^{\beta-2}v,A^\beta u\>_{W'}
&\leq &\varepsilon\lambda_{1}^{2-\beta}\vert\alpha\vert\Vert A^{\frac{\beta}{2}}v\Vert_{W'}\Vert A^{\frac{3\beta}{2}-2}u\Vert_{W'} \\
&\leq &\varepsilon\lambda_{1}^{2-\beta}\vert\alpha\vert\Vert A^{\frac{\beta}{2}}v\Vert_{W'}\frac{1}{\lambda_{1}^{\frac{5-3\beta}{2}}}\Vert A^{\frac{1}{2}}u\Vert_{W'} \\
&\leq &\varepsilon\lambda_{1}^{\frac{\beta-1}{2}}\vert\alpha\vert\Vert A^{\frac{\beta}{2}}v\Vert_{W'} \Vert A^{\frac{1}{2}}u\Vert_{W'}
\end{eqnarray*}
and that
\begin{eqnarray*}
-p\varepsilon \alpha\langle  u,A^{\beta} v\>_{W'}&\leq&p\varepsilon \vert\alpha\vert\Vert A^{\frac{\beta}{2}}v\Vert_{W'} \Vert A^{\frac{\beta}{2}}u\Vert_{W'}\\
&\leq&p\varepsilon \lambda_{1}^{\frac{\beta-1}{2}}\vert\alpha\vert\Vert A^{\frac{\beta}{2}}v\Vert_{W'} \Vert A^{\frac{1}{2}}u\Vert_{W'}.
\end{eqnarray*}
Thus
\begin{eqnarray*}
&&\varepsilon\lambda_{1}^{2-\beta}\alpha\langle A^{\beta-2}v,A^\beta u\>_{W'}-p\varepsilon \alpha\langle  u,A^{\beta} v\>_{W'}\\
&\leq &\varepsilon\lambda_{1}^{\frac{\beta-1}{2}}\vert\alpha\vert\Vert A^{\frac{\beta}{2}}v\Vert_{W'} \Vert A^{\frac{1}{2}}u\Vert_{W'}+p\varepsilon \lambda_{1}^{\frac{\beta-1}{2}}\vert\alpha\vert\Vert A^{\frac{\beta}{2}}v\Vert_{W'} \Vert A^{\frac{1}{2}}u\Vert_{W'}\\
&\leq &\varepsilon\lambda_{1}^{\frac{\beta-1}{2}}\vert\alpha\vert(p +1)\Vert A^{\frac{\beta}{2}}v\Vert_{W'} \Vert  A^{\frac{1}{2}}u\Vert_{W'}\\
&\leq &\varepsilon\lambda_{1}^{\frac{\beta-1}{2}} (p +1) \frac{\vert\alpha\vert}{2} \left(\gamma\Vert A^{\frac{1}{2}}u\Vert_{W'}^2+\frac{1}{\gamma}\Vert A^{\frac{\beta}{2}}v\Vert_{W'}^2\right)\\
\end{eqnarray*}
where we choose $\gamma>0$ such that
\begin{equation}\label{Definitiondeltazeta}\delta:=\frac{p-1}{2}-\lambda_{1}^{\frac{\beta-1}{2}} (p +1 )\frac{\vert\alpha\vert}{2}\gamma>0,\quad \zeta:=\frac{p-1}{2}\lambda_{1}^{2-\beta}-\lambda_{1}^{\frac{\beta-1}{2}} (p +1 )\frac{\vert\alpha\vert}{2\gamma}>0\end{equation}
which is equivalent to 
$$\frac{\lambda_{1}^{\frac{\beta-1}{2}} (p +1 )\vert\alpha\vert}{(p-1)\lambda_{1}^{2-\beta}}<\gamma<\frac{p-1}{\lambda_{1}^{\frac{\beta-1}{2}} (p +1 )\vert\alpha\vert}.$$
This choice is possible provided that
\begin{equation}\label{equAA2Beta}\left(\frac{p +1}{p-1}\right)^2<\frac{\lambda_{1}^{3-2\beta}}{\vert\alpha\vert^2}.\end{equation}
Now we choose $p>1$ such that \eqref{equAA2Beta} is satisfied. Then we have
\begin{eqnarray*} H_{\varepsilon}'&=&-\Vert u'\Vert^2+p\varepsilon\Vert  u'\Vert_{W'}^2-\varepsilon\lambda_{1}^{2-\beta}\Vert A^{\frac{\beta}{2}-1}v'\Vert_{W'}^2- \varepsilon\delta \Vert A^{\frac{1}{2}}u\Vert_{W'}^2-\varepsilon\zeta\Vert A^{\frac{\beta}{2}}v\Vert_{W'}^2\\
&&-p\varepsilon \langle  u,u'\>_{W'} +\rho\varepsilon\langle u',v'\>_{W'}-\rho\varepsilon \langle u',A^{-1}v'\>_{W'}-\rho\varepsilon\langle u',v\>_{W'}.
\end{eqnarray*}
Using Young's inequality, we find some constants $c_{1},c_{2},c_{3},c_{4}>0$ such that
\begin{eqnarray*}
&&-p\langle  u,u'\>_{W'}\leq c_{1}\Vert u'\Vert^2+\frac{\delta}{2}\Vert A^{\frac{1}{2}}u\Vert_{W'}^2;\quad(\delta\text{ as in }\eqref{Definitiondeltazeta})\\ 
&&\rho\langle u',v'\>_{W'}\leq c_{2}\Vert u'\Vert^2+ \frac{\lambda_{1}^{2-\beta}}{3}\Vert A^{\frac{\beta}{2}-1}v'\Vert_{W'}^2\\
&&\rho\langle u',A^{-1}v'\>_{W'}\leq c_{3}\Vert u'\Vert^2+ \frac{\lambda_{1}^{2-\beta}}{3}\Vert A^{\frac{\beta}{2}-1}v'\Vert_{W'}^2\\
&&-\rho\langle u',v\>_{W'}\leq c_{4}\Vert u'\Vert^2+\frac{\zeta}{2}\Vert A^{\frac{\beta}{2}}v\Vert_{W'}^2\quad(\zeta\text{ as in }\eqref{Definitiondeltazeta}).\end{eqnarray*}
By choosing $\varepsilon$ small enough, we find  a constant $\eta = \eta (p, \varepsilon) >0$ such that for all $t\ge 0$
\begin{eqnarray*} H_{\varepsilon}'\leq-\eta\left(\Vert u'\Vert^2+\Vert A^{\frac{\beta}{2}-1}v'\Vert_{W'}^2+ \Vert A^{\frac{1}{2}}u\Vert_{W'}^2+\Vert A^{\frac{\beta}{2}}v\Vert_{W'}^2\right)
\end{eqnarray*}
Let $$\tilde{E}=\frac12\left[\Vert A^{\frac{\beta}{2}-1}u' (t)\Vert_{W'}^2+\Vert  A^{\frac{\beta}{2}-1} v'(t)\Vert_{W'}^2+\Vert{} A^{\frac{\beta-1}{2}}u(t)\Vert_{W'}^2+\Vert A^\frac{\beta}{2}v(t)\Vert_{W'}^2\right]$$$$+\alpha\langle A^{2\beta-2}v,u\>_{W'}$$ 
and $$K(t)=\Vert A^{\frac{\beta}{2}-1}u' (t)\Vert_{W'}^2+\Vert  A^{\frac{\beta}{2}-1} v'(t)\Vert_{W'}^2+\Vert{} A^{\frac{\beta-1}{2}}u(t)\Vert_{W'}^2+\Vert A^\frac{\beta}{2}v(t)\Vert_{W'}^2.$$
For all $t\geq 0$, we have $$\tilde{E}'=-\Vert A^{\frac{\beta}{2}-1}u' (t)\Vert_{W'}^2.$$
Then $\tilde{E}$ is nonincreasing. Observe that
$$\left\vert\alpha\langle A^{2\beta-2}v,u\>_{W'}\right\vert\leq\frac{\vert\alpha\vert}{\lambda_{1}^{\frac{3-2\beta}{2}}}\Vert{} A^{\frac{\beta-1}{2}}u(t)\Vert_{W'}\Vert A^\frac{\beta}{2}v(t)\Vert_{W'},$$
from which we deduce that
\begin{equation}\label{energieKNouveau2}\frac{{\lambda_{1}^{\frac{3-2\beta}{2}}}-\vert \alpha\vert}{2\lambda_{1}^{\frac{3-2\beta}{2}}}K(t)\leq \tilde{E}\leq \frac{\lambda_{1}^{\frac{3-2\beta}{2}}+\vert\alpha\vert}{2\lambda_{1}^{\frac{3-2\beta}{2}}}K(t).\end{equation}
Now since
$$\Vert A^{\frac{\beta}{2}-1}u' (t)\Vert_{W'}\leq \frac{1}{\lambda_{1}^{2-\frac{\beta}{2}}}\Vert  u'\Vert,\qquad \Vert{} A^{\frac{\beta-1}{2}}u(t)\Vert_{W'}\leq\frac{1}{\lambda_{1}^{1-\frac{\beta}{2}}}\Vert A^{\frac{1}{2}}u\Vert_{W'},$$ 
then there exists a  constant $\gamma>0$ such that for all $t\ge 0$
\begin{equation}\label{InegH'K-13} H'_\varepsilon\leq -\gamma K(u,v,u',v') . \end{equation}
 From  \eqref{InegH'K-13}, assuming $\varepsilon $ possibly smaller in order to achieve positivity of the quadratic form $H_{\varepsilon}$, we get
$$\int_0^tK(u(s),v(s),u'(s),v'(s))\,ds\leq\frac{1}{\gamma}H_\varepsilon(u(0),v(0),u'(0),v'(0)).$$
Using  inequality \eqref{energieKNouveau2}, we obtain
$$\frac{2\lambda_{1}^{\frac{3-2\beta}{2}}}{\lambda_{1}^{\frac{3-2\beta}{2}}+\vert\alpha\vert} \int_0^t  \tilde{E}(u(s),v(s),u'(s),v'(s))  \,ds\leq \frac{1}{\gamma}H_\varepsilon(u(0),v(0),u'(0),v'(0)).$$
Now since $\tilde{E}$ is nonincreasing, it follows 
$$\tilde{E}(u(t),v(t),u'(t),v'(t)) \leq  \frac{\lambda_{1}^{\frac{3-2\beta}{2}}+\vert\alpha\vert}{2\lambda_{1}^{\frac{3-2\beta}{2}}\gamma}\frac{1}{t}H_\varepsilon(u(0),v(0),u'(0),v'(0)).$$
Using  inequality \eqref{energieKNouveau2} we get
$$K(u(t),v(t),u'(t),v'(t)) \leq \frac{\lambda_{1}^{\frac{3-2\beta}{2}}+\vert\alpha\vert}{(\lambda_{1}^{\frac{3-2\beta}{2}}-\vert\alpha\vert)\gamma}\frac{1}{t}H_\varepsilon(u(0),v(0),u'(0),v'(0)).$$
 \noindent {\bf Second case :} $\beta\in (1,\frac32]$. In this case $a=1-\beta$.
\begin{eqnarray*} H_{\varepsilon}'
&=&-\Vert u'\Vert^2+p\varepsilon\lambda_{1}^{\beta-1}\Vert A^{\frac{1-\beta}{2}}u'\Vert_{W'}^2-\varepsilon\lambda_{1}^{2-\beta}\Vert A^{\frac{\beta}{2}-1}v'\Vert_{W'}^2-p\varepsilon\lambda_{1}^{\beta-1}\Vert A^{1-\frac{\beta}{2}}u\Vert_{W'}^2-\varepsilon\frac{p-1}{2}\lambda_{1}^{2-\beta}\Vert A^{\frac{\beta}{2}}v\Vert_{W'}^2\\
&&+\rho\varepsilon\alpha\Vert A^{\frac{\beta-1}{2}}u\Vert^2_{W'}+\varepsilon\lambda_{1}^{2-\beta}\alpha\langle A^{\beta-2}v,A^\beta u\>_{W'}-p\varepsilon\lambda_{1}^{\beta-1}\langle A^{1-\beta}u,u'\>\\
&&-p\varepsilon\lambda_{1}^{\beta-1}\alpha\langle A^{1-\beta}u,A^{\beta} v\>_{W'} +\rho\varepsilon\langle u',v'\>_{W'}-\rho\varepsilon\langle u',A^{-1}v'\>_{W'}-\rho\varepsilon\langle u',v\>_{W'}. 
\end{eqnarray*}
Now since
$$\Vert A^{\frac{\beta-1}{2}}u\Vert^2_{W'}\leq \frac{1}{\lambda_{1}^{3-2\beta}}\langle A^{2-\beta}u,u\>_{W'}=\frac{1}{\lambda_{1}^{3-2\beta}}\Vert A^{1-\frac{\beta}{2}}u\Vert_{W'}^2,$$
we get
\begin{eqnarray*} H_{\varepsilon}'&\leq&-\Vert u'\Vert^2+p\varepsilon\lambda_{1}^{\beta-1}\Vert A^{\frac{1-\beta}{2}}u'\Vert_{W'}^2-\varepsilon\lambda_{1}^{2-\beta}\Vert A^{\frac{\beta}{2}-1}v'\Vert_{W'}^2-\frac{p-1}{2}\varepsilon\lambda_{1}^{\beta-1}\Vert A^{1-\frac{\beta}{2}}u\Vert_{W'}^2\\
&&-\varepsilon\frac{p-1}{2}\lambda_{1}^{2-\beta}\Vert A^{\frac{\beta}{2}}v\Vert_{W'}^2 +\varepsilon\lambda_{1}^{2-\beta}\alpha\langle A^{\beta-2}v,A^\beta u\>_{W'}-p\varepsilon\lambda_{1}^{\beta-1}\langle A^{1-\beta}u,u'\>\\
&&-p\varepsilon\lambda_{1}^{\beta-1}\alpha\langle A^{1-\beta}u,A^{\beta} v\>_{W'} +\rho\varepsilon\langle u',v'\>_{W'}-\rho\varepsilon\langle u',A^{-1}v'\>_{W'}-\rho\varepsilon\langle u',v\>_{W'}
\end{eqnarray*}
Let us remark that
\begin{eqnarray*}
\varepsilon\lambda_{1}^{2-\beta}\alpha\langle A^{\beta-2}v,A^\beta u\>_{W'}
&\leq &\varepsilon\lambda_{1}^{2-\beta}\vert\alpha\vert\Vert A^{\frac{\beta}{2}}v\Vert_{W'}\Vert A^{\frac{3\beta}{2}-2}u\Vert_{W'} \\
&\leq &\varepsilon\lambda_{1}^{2-\beta}\vert\alpha\vert\Vert A^{\frac{\beta}{2}}v\Vert_{W'}\frac{1}{\lambda_{1}^{3-2\beta}} \Vert A^{1-\frac{\beta}{2}}u\Vert_{W'} \\
&\leq &\varepsilon\lambda_{1}^{\beta-1}\vert\alpha\vert\Vert A^{\frac{\beta}{2}}v\Vert_{W'}  \Vert A^{1-\frac{\beta}{2}}u\Vert_{W'}
\end{eqnarray*}
and that
$$-p\varepsilon\lambda_{1}^{\beta-1}\alpha\langle A^{1-\beta}u,A^{\beta} v\>_{W'} \leq p\varepsilon\lambda_{1}^{\beta-1} \vert\alpha\vert\Vert A^{\frac{\beta}{2}}v\Vert_{W'} \Vert A^{1-\frac{\beta}{2}}u\Vert_{W'},$$
we therefore get
\begin{eqnarray*}
&&\varepsilon\lambda_{1}^{2-\beta}\alpha\langle A^{\beta-2}v,A^\beta u\>_{W'}-p\varepsilon\lambda_{1}^{\beta-1}\alpha\langle A^{1-\beta}u,A^{\beta} v\>_{W'}\\
&\leq &\varepsilon\lambda_{1}^{\beta-1}\vert\alpha\vert\Vert A^{\frac{\beta}{2}}v\Vert_{W'}  \Vert A^{1-\frac{\beta}{2}}u\Vert_{W'}+p\varepsilon\lambda_{1}^{\beta-1} \vert\alpha\vert\Vert A^{\frac{\beta}{2}}v\Vert_{W'} \Vert A^{1-\frac{\beta}{2}}u\Vert_{W'}\\
&=&\varepsilon(p+1)\lambda_{1}^{\beta-1}\vert\alpha\vert\Vert A^{\frac{\beta}{2}}v\Vert_{W'}  \Vert A^{1-\frac{\beta}{2}}u\Vert_{W'}\\
&\leq &\varepsilon\lambda_{1}^{\beta-1}(p +1)\frac{\vert\alpha\vert}{2} \left(\gamma\Vert A^{\frac{1-\beta}{2}}u\Vert_{W'}^2+\frac{1}{\gamma}\Vert A^{\frac{\beta}{2}}v\Vert_{W'}^2\right)\\
\end{eqnarray*}
where we choose $\gamma>0$ such that
$$\delta:=\frac{p-1}{2}\lambda_{1}^{\beta-1}-\lambda_{1}^{\beta-1} (p +1 )\frac{\vert\alpha\vert}{2}\gamma>0,\quad \zeta:=\frac{p-1}{2}\lambda_{1}^{2-\beta}-\lambda_{1}^{\beta-1} (p +1 )\frac{\vert\alpha\vert}{2\gamma}>0$$
which is equivalent to
$$\frac{\lambda_{1}^{\beta-1} (p +1 )\vert\alpha\vert}{(p-1)\lambda_{1}^{2-\beta}}<\gamma<\frac{p-1}{  (p +1 )\vert\alpha\vert}.$$
This choice is possible provided that
\begin{equation}\label{equAA2Beta32}\left(\frac{p +1}{p-1}\right)^2<\frac{\lambda_{1}^{3-2\beta}}{\vert\alpha\vert^2}.\end{equation}
We choose $p>1$ such that \eqref{equAA2Beta32} is satisfied. Then we have
\begin{eqnarray*} H_{\varepsilon}'&=&-\Vert u'\Vert^2+p\varepsilon\lambda_{1}^{\beta-1}\Vert A^{\frac{1-\beta}{2}}u'\Vert_{W'}^2-\varepsilon\lambda_{1}^{2-\beta}\Vert A^{\frac{\beta}{2}-1}v'\Vert_{W'}^2- \varepsilon\delta \Vert A^{1-\frac{\beta}{2}}u\Vert_{W'}^2-\varepsilon\zeta\Vert A^{\frac{\beta}{2}}v\Vert_{W'}^2\\
&&-p\varepsilon\lambda_{1}^{\beta-1}\langle A^{1-\beta}u,u'\> +\rho\varepsilon\langle u',v'\>_{W'}-\rho\varepsilon\langle u',A^{-1}v'\>_{W'}-\rho\varepsilon\langle u',v\>_{W'}
.
\end{eqnarray*}
There are $c_{1},c_{2},c_{3},c_{4}>0$ such that
\begin{eqnarray*}
&&-p\lambda_{1}^{\beta-1}\langle A^{1-\beta}u,u'\>\leq c_{1}\Vert u'\Vert^2+\frac{\delta}{2}\Vert A^{1-\frac{\beta}{2}}u\Vert_{W'}^2,\\ 
&&\rho\langle u',v'\>_{W'}\leq c_{2}\Vert u'\Vert^2+ \frac{\lambda_{1}^{2-\beta}}{3}\Vert A^{\frac{\beta}{2}-1}v'\Vert_{W'}^2,\\
&&\rho\varepsilon\langle u',A^{-1}v'\>_{W'}\leq c_{3}\Vert u'\Vert^2+ \frac{\lambda_{1}^{2-\beta}}{3}\Vert A^{\frac{\beta}{2}-1}v'\Vert_{W'}^2,\\
  &&-\rho\langle u',v\rangle _{W'}\leq c_{4}\Vert u'\Vert^2+\frac{\zeta}{2}\Vert A^{\frac{\beta}{2}}v\Vert_{W'}^2.\end{eqnarray*}
By choosing $\varepsilon$ small enough, we find  a constant $\eta = \eta (p, \varepsilon) >0$ such that for all $t\ge 0$
\begin{eqnarray*} H_{\varepsilon}'\leq-\eta\left(\Vert u'\Vert^2+\Vert A^{\frac{\beta}{2}-1}v'\Vert_{W'}^2+ \Vert A^{1-\frac{\beta}{2}}u\Vert_{W'}^2+\Vert A^{\frac{\beta}{2}}v\Vert_{W'}^2\right)
\end{eqnarray*}
Let $$\tilde{E}=\frac12\left[\Vert A^{\frac{-\beta}{2}}u' (t)\Vert_{W'}^2+\Vert  A^{\frac{-\beta}{2}} v'(t)\Vert_{W'}^2+\Vert{} A^{\frac{1-\beta}{2}}u(t)\Vert_{W'}^2+\Vert A^{1-\frac{\beta}{2}}v(t)\Vert_{W'}^2\right]+\alpha\langle v,u\>_{W'}$$
and 
 $$K(t)=\Vert A^{\frac{-\beta}{2}}u' (t)\Vert_{W'}^2+\Vert  A^{\frac{-\beta}{2}} v'(t)\Vert_{W'}^2+\Vert{} A^{\frac{1-\beta}{2}}u(t)\Vert_{W'}^2+\Vert A^{1-\frac{\beta}{2}}v(t)\Vert_{W'}^2.$$
For all $t\geq 0$, we have
$$\tilde{E}'=-\Vert A^{-\frac{\beta}{2}}u' (t)\Vert_{W'}^2.$$
Then $\tilde{E}$ is nonincreasing.\\
Since $$\left\vert\alpha\langle  v,u\>_{W'}\right\vert\leq\frac{\vert\alpha\vert}{\lambda_{1}^{\frac{3-2\beta}{2}}}\Vert{} A^{\frac{1-\beta}{2}}u(t)\Vert_{W'}\Vert A^{1-\frac{\beta}{2}}v(t)\Vert_{W'}$$
then we get
\begin{equation}\label{energieKNouveau1}\frac{\lambda_{1}^{\frac{3-2\beta}{2}}-\vert \alpha\vert}{2\lambda_{1}^{\frac{3-2\beta}{2}}}K(t)\leq \tilde{E}\leq \frac{\lambda_{1}^{\frac{3-2\beta}{2}}+\vert\alpha\vert}{2\lambda_{1}^{\frac{3-2\beta}{2}}}K(t).\end{equation}
Now since
$$\Vert A^{-\frac{\beta}{2}}v' (t)\Vert_{W'}\leq \frac{1}{\lambda_{1}^{\beta-1}}\Vert A^{\frac{\beta}{2}-1}v'\Vert_{W'},\qquad \Vert{} A^{\frac{1-\beta}{2}}u(t)\Vert_{W'}\leq\frac{1}{\lambda_{1}^{\frac{1}{2}}}\Vert A^{1-\frac{\beta}{2}}u\Vert_{W'},$$
then there exists a  constant $\gamma>0$ such that for all $t\ge 0$
\begin{equation}\label{InegH'K-12} H'_\varepsilon\leq -\gamma K(u,v,u',v') . \end{equation} From  \eqref{InegH'K-12}, assuming $\varepsilon $ possibly smaller in order to achieve positivity of the quadratic form $H_{\varepsilon}$, we get
$$\int_0^tK(u(s),v(s),u'(s),v'(s))\,ds\leq\frac{1}{\gamma}H_\varepsilon(u(0),v(0),u'(0),v'(0)).$$
Using  inequality \eqref{energieKNouveau1}, we obtain
$$\frac{2\lambda_{1}^{\frac{3-2\beta}{2}}}{\lambda_{1}^{\frac{3-2\beta}{2}}+\vert\alpha\vert} \int_0^t  \tilde{E}(u(s),v(s),u'(s),v'(s))  \,ds\leq \frac{1}{\gamma}H_\varepsilon(u(0),v(0),u'(0),v'(0)).$$
Now since $\tilde{E}$ is nonincreasing, it follows 
$$\tilde{E}(u(t),v(t),u'(t),v'(t)) \leq  \frac{\lambda_{1}^{\frac{3-2\beta}{2}}+\vert\alpha\vert}{2\lambda_{1}^{\frac{3-2\beta}{2}}\gamma}\frac{1}{t}H_\varepsilon(u(0),v(0),u'(0),v'(0)).$$
Using  inequality \eqref{energieKNouveau1} we get
$$K(u(t),v(t),u'(t),v'(t)) \leq \frac{\lambda_{1}^{\frac{3-2\beta}{2}}+\vert\alpha\vert}{(\lambda_{1}^{\frac{3-2\beta}{2}}-\vert\alpha\vert)\gamma}\frac{1}{t}H_\varepsilon(u(0),v(0),u'(0),v'(0)).$$
\end{proof} 
\section{Examples }
This section is devoted to giving examples of operators to which Theorem \ref{mainthm} applies.

\noindent {\bf Example 1} The first case that we consider is when $H=L^2(\Omega)$ and
\begin{equation*}
Au=-\sum_{i,j=1}^N\frac{\partial}{\partial x_i}(a_{ij}\frac{\partial}{\partial x_j}u),
\end{equation*}
  where the coefficients $a_{i,j}\in C^1(\bar{\Omega})$ satisfy
  \begin{eqnarray}
    a_{ij}=a_{ji},\,\forall i,j,
    \end{eqnarray}
  and the matrix $(a_{i,j})$ is uniformly coercive on $\overline{\Omega}$. With these assumptions we have $D(A)=H^2(\Omega)\cap H_0^1(\Omega)$.

\noindent {\bf Example 2} The second example considered is $H=L^2(\Omega)$, $D(A)=\{u\in H^2(\Omega),\,\frac{\partial u}{\partial n}=0\mbox{ on }\partial \Omega\},$
  and
\begin{equation*}
Au=-\Delta u+\rho_1 u,
\end{equation*}
  where $\rho_1>0$.\\
  Here we can, as in \cite{ACK02} consider the case where
  \begin{equation*}
    D(A_2)=D(A^2),\, A_2u=\Delta^2u+\rho_2u,
  \end{equation*}
  where $\rho_2>0$.

\noindent {\bf Example 3} Let us remark that due to remark \ref{pert} and the Poincar\'e inequality, our result applies to the case when $A_{1}=A$ is as in example 1 and  $A_2=A_1^2+\zeta A_1$ for any $\zeta>0$.

\noindent {\bf Acknowledgements: } The second author wishes to thank the department of mathematics and statistics and the laboratory M2N of the CNAM where this work has been initiated. The first author wishes to thank the Tunisian Mathematical Society (SMT) for its kind invitation to its annual congress during which this work has been completed.
\bibliographystyle{amsplain}
\providecommand{\bysame}{\leavevmode\hbox to3em{\hrulefill}\thinspace}

\end{document}